
\documentclass[12pt, reqno]{amsart}%
\usepackage{amsmath, amsthm, amscd, amsfonts, amssymb, graphicx, color}
\usepackage[bookmarksnumbered, colorlinks, plainpages]{hyperref}%
\usepackage{amsmath}%
\setcounter{MaxMatrixCols}{30}%
\usepackage{amsfonts}%
\usepackage{amssymb}%
\usepackage{graphicx}
\providecommand{\U}[1]{\protect \rule{.1in}{.1in}}
\textheight 22.5truecm \textwidth 14.5truecm
\setlength{\oddsidemargin}{0.35in}\setlength{\evensidemargin}{0.35in}
\setlength{\topmargin}{-.5cm}
\newtheorem{theorem}{Theorem}[section]

\newtheorem{proposition}[theorem]{Proposition}
\newtheorem{corollary}[theorem]{Corollary}
\theoremstyle{definition}
\newtheorem{definition}[theorem]{Definition}

\theoremstyle{remark}
\newtheorem{remark}[theorem]{Remark}
\numberwithin{equation}{section}

\begin{document}
\title[Spinor Frenet equations in three dimensional Lie groups]{Spinor Frenet equations in three dimensional Lie groups}

\begin{abstract}
In this paper, we study spinor Frenet equations in three dimensional Lie
Groups with a bi-invariant metric. Also, we obtain spinor Frenet equations for
special cases of three dimensional Lie groups.

\end{abstract}
\subjclass[2010]{Primary 11A66; Secondary 22E15.}
\keywords{Spinor, Frenet equations, Lie groups.}
\author{O. Zeki Okuyucu}
\address{Department of Mathematics, Bilecik \c{S}eyh Edebali University, Turkey}
\email{osman.okuyucu@bilecik.edu.tr}
\author{\"{O}. G\"{o}kmen Y\i ld\i z}
\address{Department of Mathematics, Bilecik \c{S}eyh Edebali University, Turkey}
\email{ogokmen.yildiz@bilecik.edu.tr}
\author{Murat Tosun}
\address{Department of Mathematics, Sakarya University, Turkey}
\email{tosun@sakarya.edu.tr}
\maketitle

\setcounter{page}{1}

\setcounter{page}{1}


\setcounter{page}{1}


\section{Introduction}

In differential geometry, we think that curves are geometric set of points of
loci. Curves theory is a important workframe in the differential geometry
studies. Geometric properties of a curve are heavily studied for a long time
and are still studied in Euclidean space and other spaces. One of the most
important tools used to analyze a curve is the Frenet frame, a moving frame
that provides an orthonormal coordinate system at each point of the curve. We
can show that this orthonormal system with $\left \{  T,N,B\right \}  $ in
Euclidean $3$-space, which are called tangent vector field, principal normal
vector field and binormal vector field, respectively. And for a curve, we can
calculate curvature and torsion along the curve with the help of Frenet
vectors. They are so useful for characterizations of special curves like that
general helices, Slant helices, Mannheim curves, Bertrand curves etc.
Characterizations of these curves are studied in different spaces and we can
see the applications of special curves in nature, mechanic tools, computer
aided design and computer graphics etc.

Recently, Castillo and Barrales studied spinor formulation of the Frenet
equations of the curve and they obtained spinor equivalent of the Frenet
equations for a curve \cite{castillo}. Moreover Ki\c{s}i et.al. and \"{U}nal
et. al. defined spinor formulations of the Frenet equations of a curve
according to Darboux Frame and Bishop Frame, respectively (see \cite{kisi} and
\cite{unal}).

In this paper, we define spinor formulation of the Frenet equations of the
curve in three dimensional Lie Groups. Also we give this formulation for
special cases of three dimensional Lie groups. We believe that this study will
be useful for curves theory in Lie groups.

\section{Preliminaries}

Let $G$ be a Lie group with a bi-invariant metric $\left \langle \text{
},\right \rangle $ and $D$ be the Levi-Civita connection of Lie group $G.$ If
$\mathfrak{g}$ denotes the Lie algebra of $G$ then we know that $\mathfrak{g}
$ is issomorphic to $T_{e}G$ where $e$ is neutral element of $G.$ If
$\left \langle \text{ },\right \rangle $ is a bi-invariant metric on $G$ then we
have%
\begin{equation}
\left \langle X,\left[  Y,Z\right]  \right \rangle =\left \langle \left[
X,Y\right]  ,Z\right \rangle \label{2-1}%
\end{equation}
and
\begin{equation}
D_{X}Y=\frac{1}{2}\left[  X,Y\right] \label{2-2}%
\end{equation}
for all $X,Y$ and $Z\in \mathfrak{g}.$


Let $\alpha:I\subset \mathbb{R\rightarrow}G$ be an arc-lenghted curve and
$\left \{  X_{1},X_{2,}...,X_{n}\right \}  $ be an orthonormal basis of
$\mathfrak{g}.$ In this case, we write that any two vector fields $W$ and $Z$
along the curve $\alpha \ $as $W=\sum_{i=1}^{n}w_{i}X_{i}$ and $Z=\sum
_{i=1}^{n}z_{i}X_{i}$ where $w_{i}:I\rightarrow \mathbb{R}$ and $z_{i}%
:I\rightarrow \mathbb{R}$ are smooth functions. Also the Lie bracket of two
vector fields $W$ and $Z$ is given
\[
\left[  W,Z\right]  =\sum_{i=1}^{n}w_{i}z_{i}\left[  X_{i},X_{j}\right]
\]
and the covariant derivative of $W$ along the curve $\alpha$ with the notation
$D_{\alpha^{\shortmid}}W$ is given as follows%
\begin{equation}
D_{\alpha^{\shortmid}}W=\overset{\cdot}{W}+\frac{1}{2}\left[  T,W\right]
\label{2-3}%
\end{equation}
where $T=\alpha^{\prime}$ and $\overset{\cdot}{W}=\sum_{i=1}^{n}\overset
{\cdot}{w_{i}}X_{i}$ or $\overset{\cdot}{W}=\sum_{i=1}^{n}\frac{dw}{dt}X_{i}.$
Note that if $W$ is the left-invariant vector field to the curve $\alpha$ then
$\overset{\cdot}{W}=0$ (see \cite{crouch} for details).


Let $G$ be a three dimensional Lie group and $\left(  T,N,B,\varkappa
,\tau \right)  $ denote the Frenet apparatus of the curve $\alpha$, and
calculate $\varkappa=\overset{\cdot}{\left \Vert T\right \Vert }.$


\begin{definition}
\label{2.2}Let $\alpha:I\subset \mathbb{R\rightarrow}G$ be a parametrized curve
with the Frenet apparatus $\left(  T,N,B,\varkappa,\tau \right)  $ then
\begin{equation}
\tau_{G}=\frac{1}{2}\left \langle \left[  T,N\right]  ,B\right \rangle
\label{2-4}%
\end{equation}
or
\[
\tau_{G}=\frac{1}{2\varkappa^{2}\tau}\overset{\cdot \cdot \text{
\  \  \  \  \  \  \  \ }\cdot}{\left \langle T,\left[  T,T\right]  \right \rangle
}+\frac{1}{4\varkappa^{2}\tau}\overset{\text{ \  \ }\cdot}{\left \Vert \left[
T,T\right]  \right \Vert ^{2}}%
\]
(see \cite{ciftci}).
\end{definition}


\begin{proposition}
\label{pro 2.3}Let $\alpha:I\subset \mathbb{R\rightarrow}G$ be an arc length
parametrized curve with the Frenet apparatus $\left \{  T,N,B\right \}  $. Then
the following equalities%
\begin{align*}
\left[  T,N\right]   &  =\left \langle \left[  T,N\right]  ,B\right \rangle
B=2\tau_{G}B\\
\left[  T,B\right]   &  =\left \langle \left[  T,B\right]  ,N\right \rangle
N=-2\tau_{G}N
\end{align*}
hold (see \cite{zeki}).
\end{proposition}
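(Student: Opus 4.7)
The plan is to expand each bracket in the orthonormal Frenet basis $\{T,N,B\}$ and then use the bi-invariance identity \eqref{2-1} together with the antisymmetry of the Lie bracket to show that only one coefficient survives.

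First I would write, for $[T,N]$,
\[
[T,N]=\langle [T,N],T\rangle\, T+\langle [T,N],N\rangle\, N+\langle [T,N],B\rangle\, B,
\]
and dispose of the first two coefficients. For the $T$-coefficient, apply \eqref{2-1} with $(X,Y,Z)=(T,T,N)$ to get $\langle T,[T,N]\rangle=\langle[T,T],N\rangle=0$, since $[T,T]=0$. For the $N$-coefficient, apply \eqref{2-1} with $(X,Y,Z)=(N,T,N)$ to obtain $\langle N,[T,N]\rangle=\langle[N,T],N\rangle=-\langle[T,N],N\rangle$, forcing $\langle[T,N],N\rangle=0$. Thus $[T,N]=\langle[T,N],B\rangle B$, and invoking Definition \ref{2.2} identifies the remaining coefficient as $2\tau_G$, giving the first equality.

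For the second equality I would proceed identically: writing $[T,B]$ in the Frenet basis, the $T$-component vanishes by $\langle T,[T,B]\rangle=\langle[T,T],B\rangle=0$, and the $B$-component vanishes by the same antisymmetry trick, $\langle B,[T,B]\rangle=\langle[B,T],B\rangle=-\langle[T,B],B\rangle$. So $[T,B]=\langle[T,B],N\rangle N$. To identify this coefficient, I would apply \eqref{2-1} once more with $(X,Y,Z)=(B,T,N)$, obtaining
\[
-\langle[T,B],N\rangle=\langle[B,T],N\rangle=\langle B,[T,N]\rangle=\langle B,2\tau_G B\rangle=2\tau_G,
\]
using the first equality just proved and $\|B\|=1$. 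Hence $\langle[T,B],N\rangle=-2\tau_G$, which yields $[T,B]=-2\tau_G N$.

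There is no real obstacle here; the argument rests on two observations only: the ad-invariance relation \eqref{2-1} implied by the bi-invariant metric, and the fact that the Frenet frame is orthonormal in a three-dimensional ambient space, so the brackets of distinct frame vectors have a single free component. The step that requires the most care is bookkeeping the sign in the second equality via the antisymmetry $[B,T]=-[T,B]$ when relating $\langle[T,B],N\rangle$ to $\langle[T,N],B\rangle$.
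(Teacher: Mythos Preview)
Your argument is correct. Note, however, that the paper does not supply its own proof of this proposition: it simply cites \cite{zeki}, so there is nothing to compare against here. Your write-up is the standard derivation---expand in the orthonormal Frenet basis, kill the $T$- and diagonal components via the ad-invariance identity \eqref{2-1} and the antisymmetry of the bracket, and read off the surviving coefficient from Definition~\ref{2.2}---and it is entirely sound, including the sign bookkeeping in the second equality.
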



In the following sentences we give some properties about spinors:

We can represent a spinor
\[
\phi=\left(
\begin{array}
[c]{c}%
\phi_{1}\\
\phi_{2}%
\end{array}
\right)
\]
by way of three vector \ $a,b,c\in%
\mathbb{R}
^{3}$ so that%
\begin{equation}
a+ib=\phi^{t}\sigma \phi,\text{ \  \ }c=-\widehat{\phi^{t}}\sigma \phi
,\label{2-5}%
\end{equation}
where the superscript $t$\ denotes transposition and $\widehat{\phi}$ is the
mate ( or conjugate \cite{cartan})
\[
\widehat{\phi}\equiv-\left(
\begin{array}
[c]{cc}%
0 & 1\\
-1 & 0
\end{array}
\right)  \text{, }\overline{\phi}=\left(
\begin{array}
[c]{c}%
-\overline{\phi}_{2}\\
\overline{\phi}_{1}%
\end{array}
\right)
\]
where the bar indicates complex conjugation. In addition, $\sigma=\left(
\sigma_{1},\sigma_{2},\sigma_{3}\right)  $\ is a vector whose cartesian
components are the complex symmetrics $2\times2$\ matrices%
\begin{equation}
\sigma_{1}=\left(
\begin{array}
[c]{cc}%
1 & 0\\
0 & -1
\end{array}
\right)  ,\text{ }\sigma_{2}=\left(
\begin{array}
[c]{cc}%
i & 0\\
0 & i
\end{array}
\right)  ,\text{ }\sigma_{3}=\left(
\begin{array}
[c]{cc}%
0 & -1\\
-1 & 0
\end{array}
\right)  .\label{2-6}%
\end{equation}
Then, the vector $a,b$ and $c$ are clearly given by%
\begin{align*}
a+ib  & =\left(  \phi_{1}^{2}-\phi_{2}^{2},i(\phi_{1}^{2}+\phi_{2}^{2}%
),-2\phi_{1}\phi_{2}\right) \\
c  & =\left(  \phi_{1}\overline{\phi}_{2}+\overline{\phi}_{1}\phi_{2},i\left(
\overline{\phi}_{2}\phi_{1}-\overline{\phi}_{1}\phi_{2}\right)  ,\left \vert
\phi_{1}\right \vert ^{2}-\left \vert \phi_{2}\right \vert ^{2}\right)
\end{align*}
Since the vector $a+ib$ is a isotropic vector, by way of an explict
computation one finds that $a,b$ and $c$ are one another orthogonal and
$\left \vert a\right \vert =\left \vert b\right \vert =\left \vert c\right \vert
=\overline{\phi}^{t}\phi$. Moreover $\left \langle a\times b,c\right \rangle
=\det(a,b,c)>0.$

Conversely, if the vector $a,b$ and $c$ one another orthogonal vectors of same
magnitude $\left(  \left \langle a\times b,c\right \rangle >0\right)  ,$ then
there is a spinor defined up to sign such that the Eq. \eqref{2-5} holds.

As it is mentioned the above, for two arbitrary $\psi$ and $\phi$, there exist
following equalities
\begin{align*}
\overline{\psi^{t}\sigma \phi}  & =-\widehat{\psi^{t}}\sigma \widehat{\phi}\\
\widehat{a\psi+b\phi}  & =\overline{a}\widehat{\psi}+\overline{b}\widehat
{\phi}\\
\widehat{\widehat{\phi}}  & =-\phi
\end{align*}
where $a$\ and $b$\ are complex numbers \cite{cartan} and \cite{castillo}.

The relations between spinors and orthonormal bases given in Eq. \eqref{2-5}
is two to on; the spinors $\phi$ and $-\phi$ correspond to the same ordered
orthonormal bases $\left \{  a,b,c\right \}  $, with $\left \vert a\right \vert
=\left \vert b\right \vert =\left \vert c\right \vert $ and $\left \langle a\times
b,c\right \rangle >0.$ Moreover we can define different spinors by using the
ordered triads $\left \{  a,b,c\right \}  ,\left \{  b,c,a\right \}  $ and
$\left \{  c,a,b\right \}  .$

The equation $\psi^{t}\sigma \phi=\phi^{t}\sigma \psi$ is satisfied for any pair
of $\psi$ and $\phi$, because the matrices $\sigma$ given by Eq. \eqref{2-6}
are symmetric. For any spinor $\phi$ and its conjugate $\widehat{\phi}$ are
linearly independent, where $\phi \neq0.$

\section{Spinor Frenet equations in a three dimensional Lie groups}

In this section we define the spinor Frenet equations for a curve in a three
dimensional Lie group $G$ with a bi-invariant metric $\left \langle \text{
},\right \rangle $. Also we give the spinor Frenet equations in the special
cases of $G$.


Let $\alpha:I\subset \mathbb{R\rightarrow}G$ be a curve with arc-length
parameter $s$. Then we can wirte with the help of the Eq. \eqref{2-3} and
Proposition \ref{pro 2.3} the following equations for Frenet vectors of the
curve $\alpha$%
\begin{equation}%
\begin{bmatrix}
\frac{dT}{ds}\\
\frac{dN}{ds}\\
\frac{dB}{ds}%
\end{bmatrix}
=%
\begin{bmatrix}
0 & \varkappa & 0\\
-\varkappa & 0 & \left(  \tau-\tau_{G}\right) \\
0 & -\left(  \tau-\tau_{G}\right)  & 0
\end{bmatrix}%
\begin{bmatrix}
T\\
N\\
B
\end{bmatrix}
\label{3-1}%
\end{equation}
where $\left \{  T,N,B\right \}  $ is Frenet frame, $\tau_{G}=\frac{1}%
{2}\left \langle \left[  T,N\right]  ,B\right \rangle $ and $\varkappa,$ $\tau$
are curvatures of the curve $\alpha$ in three dimensional Lie group $G,$ respectively.

According to the results presented in section $2$, there exists a spinor
$\phi,$ such that%
\begin{equation}
N+iB=\phi^{t}\sigma \phi,\text{ \  \  \  \  \  \  \  \  \ }T=\widehat{\phi^{t}}%
\sigma \phi \label{3-2}%
\end{equation}
with $\overline{\phi^{t}}\phi=1$. Therefore the spinor $\phi$ denotes the
triad $\left \{  N,B,T\right \}  .$ And the Frenet equations for each point of
the curve $\alpha$ must correspond to some expression for $\frac{d\phi}{ds}.$

As $\left \{  \phi,\widehat{\phi}\right \}  $ is a basis for the two component
spinors $\left(  \phi \neq0\right)  $, there are two functions $g$ and $h$,
such that%
\begin{equation}
\frac{d\phi}{ds}=g\phi+h\widehat{\phi},\label{3-3}%
\end{equation}
where the functions $g$ and $h$ are possibly complex-valued functions.

Diferentiating the first equation in the Eq. \eqref{3-2} with respect to $s$
and by using the Frenet equations, we get%
\begin{align}
\frac{dN}{ds}+i\frac{dB}{ds}  & =\frac{d}{ds}\left(  \phi^{t}\sigma \phi \right)
\nonumber \\
-\varkappa T-i\left(  \tau-\tau_{G}\right)  \left \{  N+iB\right \}   & =\left(
\frac{d\phi}{ds}\right)  ^{t}\sigma \phi+\phi^{t}\sigma \left(  \frac{d\phi}%
{ds}\right)  .\label{3-4}%
\end{align}
If we think together the Eq. \eqref{3-3} and Eq. \eqref{3-4}, using again the
Eq. \eqref{3-2}, we get%
\[
-\varkappa T-i\left(  \tau-\tau_{G}\right)  \left \{  N+iB\right \}
=2g(N+iB)-2h(T).
\]
From the last equation we can write,%
\[
g=-i\frac{\left(  \tau-\tau_{G}\right)  }{2}\text{ and }h=\frac{\varkappa}{2}.
\]
Thus, we have proved the following theorem.


\begin{theorem}
\label{teo 3.1}Let $\alpha$\ be an arc-lenghted regular curve with the Frenet
vector fields $\left \{  T,N,B\right \}  $ in the Lie group $G$. If\ its
two-component spinors $\phi$ represents the triad $\left \{  N,B,T\right \}  $,
then the Frenet equations are equivalent to the single spinor equation%
\[
\frac{d\phi}{ds}=-i\frac{\left(  \tau-\tau_{G}\right)  }{2}\phi+\frac
{\varkappa}{2}\widehat{\phi},
\]
where $\tau_{G}=\frac{1}{2}\left \langle \left[  T,N\right]  ,B\right \rangle $
and $\varkappa$, $\tau$ are curvatures of the curve $\alpha.$
\end{theorem}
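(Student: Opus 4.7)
The plan is to translate the Frenet system \eqref{3-1} in $G$ into a first-order ODE for the spinor $\phi$ by using that $\{\phi,\widehat{\phi}\}$ is a basis of the two-component spinor space (valid because $\phi\neq 0$) and that the identifications in \eqref{3-2} are compatible with the symmetry properties of the matrices $\sigma_k$. So the strategy reduces to (i) writing the unknown derivative as $\frac{d\phi}{ds}=g\phi+h\widehat\phi$ with $g,h$ possibly complex, (ii) differentiating the complex-vector identity $N+iB=\phi^t\sigma\phi$, (iii) substituting the Frenet relations from \eqref{3-1} on the left and the ansatz on the right, and (iv) reading off $g$ and $h$ from a comparison of coefficients in the basis $\{N+iB,\,T\}$ of the relevant two-dimensional subspace.

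First I would differentiate $N+iB=\phi^t\sigma\phi$ with respect to the arc length, obtaining
\[
\frac{dN}{ds}+i\,\frac{dB}{ds}=\Bigl(\frac{d\phi}{ds}\Bigr)^{\!t}\sigma\phi+\phi^t\sigma\Bigl(\frac{d\phi}{ds}\Bigr).
\]
On the left, the Lie-group Frenet equations \eqref{3-1} give
\[
\frac{dN}{ds}+i\frac{dB}{ds}=-\varkappa T-i(\tau-\tau_G)(N+iB),
\]
which is already expressed in the spinor dictionary via \eqref{3-2}. On the right, substituting $\frac{d\phi}{ds}=g\phi+h\widehat\phi$ and using symmetry $\psi^t\sigma\phi=\phi^t\sigma\psi$ of the $\sigma_k$, the two terms collapse to $2g\,\phi^t\sigma\phi+2h\,\widehat{\phi^t}\sigma\phi=2g(N+iB)-2h\,T$, where the sign on the $T$-term uses $\widehat{\phi^t}\sigma\phi=T$ from \eqref{3-2}.

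Equating the two expressions yields
\[
-\varkappa T-i(\tau-\tau_G)(N+iB)=2g(N+iB)-2hT,
\]
and since $\{N+iB,\,T\}$ are linearly independent (indeed $N,B,T$ are mutually orthogonal), one may compare coefficients to extract $g=-i(\tau-\tau_G)/2$ and $h=\varkappa/2$, which is exactly the claimed equation.

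The only non-routine point I anticipate is making sure that the cross term $\phi^t\sigma\widehat\phi$ collapses to $T$ rather than to something involving $\overline\phi$: this uses the symmetry identity for the $\sigma_k$ and the mate identities listed after \eqref{2-6}, combined with $\widehat{\widehat\phi}=-\phi$. Once that bookkeeping is done, the comparison of coefficients is immediate and no additional verification against the $T$-equation of \eqref{3-1} is needed, because the second relation in \eqref{3-2} together with $\overline{\phi^t}\phi=1$ is preserved by the resulting ODE; if desired, one can check consistency by differentiating $T=\widehat{\phi^t}\sigma\phi$ and recovering $dT/ds=\varkappa N$, but this is a sanity check rather than part of the proof.
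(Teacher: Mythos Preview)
Your argument is correct and follows exactly the same route as the paper: write $\frac{d\phi}{ds}=g\phi+h\widehat{\phi}$, differentiate $N+iB=\phi^{t}\sigma\phi$, substitute the Lie-group Frenet equations \eqref{3-1} on the left and the ansatz on the right, simplify via the symmetry $\psi^{t}\sigma\phi=\phi^{t}\sigma\psi$, and compare coefficients to read off $g=-i(\tau-\tau_G)/2$, $h=\varkappa/2$. The only cosmetic point is the sign bookkeeping for the $T$-term, which really comes from the convention $c=-\widehat{\phi^{t}}\sigma\phi$ in \eqref{2-5} rather than the (slightly inconsistently written) \eqref{3-2}; with that convention your $-2hT$ is exactly what the paper obtains.
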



Now, we introduce this equation for special cases of three dimensional Lie
group. In the following remark, we note that three dimensional Lie groups
admitting bi-invariant metrics are $S^{3},$ $SO^{3}$ and Abelian Lie groups
using the same notation as in \cite{ciftci} and \cite{santo} as follows:

\begin{remark}
\label{rem 3.1}Let $G$ be a Lie group with a bi-invariant metric $\left \langle
,\right \rangle $. Then the following equalities can be given in
different Lie groups:
\end{remark}

\qquad \qquad \textbf{i) }If $G$ is abelian group then $\tau_{G}=0.$

\qquad \qquad \textbf{ii) }If $G$ is $SO^{3}$ then $\tau_{G}=\frac{1}{2}.$

\qquad \qquad \textbf{iii) }If $G$ is $S^{3}$ then $\tau_{G}=1$

(see for details \cite{ciftci} and \cite{santo}).

\begin{corollary}
\label{cor 3.1}Let $\alpha$\ be an arc-lenghted regular curve with the Frenet
vector fields $\left \{  T,N,B\right \}  $ in the Abelian Lie group $G$. If\ its
two-component spinors $\phi$ represents the triad $\left \{  N,B,T\right \}  $,
then the Frenet equations are equivalent to the single spinor equation%
\[
\frac{d\phi}{ds}=-i\frac{\tau}{2}\phi+\frac{\varkappa}{2}\widehat{\phi},
\]
where $\varkappa$ and $\tau$ are curvatures of the curve $\alpha.$
\end{corollary}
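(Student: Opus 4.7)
The plan is to obtain Corollary \ref{cor 3.1} as a direct specialization of Theorem \ref{teo 3.1} to the abelian setting, with no additional spinor computation required. Theorem \ref{teo 3.1} has already established, for an arbitrary three-dimensional Lie group $G$ with bi-invariant metric, that under the identification \eqref{3-2} the Frenet system \eqref{3-1} is equivalent to
$$\frac{d\phi}{ds} = -i\frac{(\tau-\tau_{G})}{2}\phi + \frac{\varkappa}{2}\widehat{\phi}.$$
Since this equivalence was proved without any assumption on $G$ beyond bi-invariance, all that remains is to pin down the value of $\tau_{G}$ when $G$ is abelian.

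For this step I would invoke Remark \ref{rem 3.1}(i), which records that $\tau_{G}=0$ in the abelian case. If one prefers a self-contained justification, it follows directly from Definition \ref{2.2}: expanding $T=\sum_{i} t_{i} X_{i}$ and $N=\sum_{j} n_{j} X_{j}$ in an orthonormal basis of the Lie algebra $\mathfrak{g}$, every bracket $[X_{i},X_{j}]$ vanishes because $G$ is abelian, hence $[T,N]\equiv 0$ along $\alpha$ and therefore
$$\tau_{G} = \tfrac{1}{2}\langle [T,N],B\rangle = 0.$$

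Substituting $\tau_{G}=0$ into the spinor equation from Theorem \ref{teo 3.1} immediately yields
$$\frac{d\phi}{ds} = -i\frac{\tau}{2}\phi + \frac{\varkappa}{2}\widehat{\phi},$$
which is the desired identity. I do not anticipate any genuine obstacle: the proof is essentially a one-line substitution. The only qualitative remark worth making is a consistency check, namely that in the abelian case the system \eqref{3-1} collapses to the classical Euclidean Frenet system, so the resulting spinor equation necessarily reduces to the one obtained by Castillo and Barrales in \cite{castillo}, as expected.
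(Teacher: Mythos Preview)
Your proposal is correct and follows essentially the same route as the paper: invoke Theorem~\ref{teo 3.1} and then use Remark~\ref{rem 3.1}(i) to set $\tau_{G}=0$, which immediately gives the stated spinor equation. Your added self-contained justification that $[T,N]=0$ in the abelian case is a harmless elaboration of what the paper leaves implicit.
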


\begin{proof}
If $G$ is Abellian Lie group then using the above Remark and the Theorem
\ref{teo 3.1} we have the result.
\end{proof}

From the above corollary we can see easily this study is a generalization of
the spinor equivalent of the Frenet equations for a curve defined by Del
Castillo and Barrales \cite{castillo} in Euclidean 3-space. Moreover, with a
similar proof, we have the following two corollaries.

\begin{corollary}
\label{cor 3.2}Let $\alpha$\ be an arc-lenghted regular curve with the Frenet
vector fields $\left \{  T,N,B\right \}  $ in the Lie group $SO^{3}$. If\ its
two-component spinors $\phi$ represents the triad $\left \{  N,B,T\right \}  $,
then the Frenet equations are equivalent to the single spinor equation%
\[
\frac{d\phi}{ds}=-i\frac{\left(  \tau-\frac{1}{2}\right)  }{2}\phi
+\frac{\varkappa}{2}\widehat{\phi},
\]
where $\varkappa$ and $\tau$ are curvatures of the curve $\alpha.$
\end{corollary}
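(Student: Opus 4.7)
The plan is to reduce Corollary \ref{cor 3.2} to Theorem \ref{teo 3.1} by specializing the ambient group. Since the hypotheses of the corollary are exactly those of Theorem \ref{teo 3.1} with the additional requirement that $G = SO^{3}$, the natural approach is to invoke Theorem \ref{teo 3.1} as a black box and then apply Remark \ref{rem 3.1} to evaluate the Lie-group torsion $\tau_{G}$ at the specific constant value forced by the geometry of $SO^{3}$.

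Concretely, first I would set up the data: let $\alpha$ be an arc-length parametrized regular curve in $SO^{3}$ with Frenet apparatus $(T,N,B,\varkappa,\tau)$, and let $\phi$ be the two-component spinor representing the triad $\{N,B,T\}$ via Eq.~\eqref{3-2}, normalized by $\overline{\phi^{t}}\phi = 1$. Next, I would apply Theorem \ref{teo 3.1} to obtain the general spinor Frenet equation
\[
\frac{d\phi}{ds} = -i\,\frac{(\tau-\tau_{G})}{2}\,\phi + \frac{\varkappa}{2}\,\widehat{\phi}.
\]
Then I would specialize: by Remark \ref{rem 3.1}(ii), the Lie-group torsion satisfies $\tau_{G} = \tfrac{1}{2}$ identically on $SO^{3}$. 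Substituting this constant into the coefficient of $\phi$ yields exactly the equation claimed in the corollary.

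There is essentially no obstacle here; the work is structural rather than computational, since the entire content of Theorem \ref{teo 3.1}, including the derivation of the coefficients $g = -i(\tau-\tau_{G})/2$ and $h = \varkappa/2$ from the relations \eqref{3-2}--\eqref{3-4} and Proposition \ref{pro 2.3}, is already available. The only point that deserves a brief comment in the write-up is that $\tau_{G}$ is genuinely constant on $SO^{3}$ (independent of the curve $\alpha$), so that the substitution produces a spinor equation whose coefficients depend only on the intrinsic Frenet invariants $\varkappa$ and $\tau$ of $\alpha$. The proof can therefore be stated in a single sentence, mirroring the style used for Corollary \ref{cor 3.1}.
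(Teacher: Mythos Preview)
Your proposal is correct and matches the paper's approach exactly: the paper indicates that the corollary follows, as for Corollary~\ref{cor 3.1}, by combining Theorem~\ref{teo 3.1} with Remark~\ref{rem 3.1}(ii), which gives $\tau_{G}=\tfrac{1}{2}$ on $SO^{3}$. There is nothing further to add.
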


\begin{corollary}
\label{cor 3.3}Let $\alpha$\ be an arc-lenghted regular curve with the Frenet
vector fields $\left \{  T,N,B\right \}  $ in the Lie group $S^{3}$. If\ its
two-component spinors $\phi$ represents the triad $\left \{  N,B,T\right \}  $,
then the Frenet equations are equivalent to the single spinor equation%
\[
\frac{d\phi}{ds}=-i\frac{\left(  \tau-1\right)  }{2}\phi+\frac{\varkappa}%
{2}\widehat{\phi},
\]
where $\varkappa$ and $\tau$ are curvatures of the curve $\alpha.$
\end{corollary}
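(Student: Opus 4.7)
The plan is a direct specialization: Corollary 3.3 should follow immediately from Theorem 3.1 together with the single numerical input recorded in Remark 3.1 (iii), so essentially no new geometric argument is needed. First I would cite Remark 3.1 (iii), which states that for the Lie group $S^{3}$ equipped with its bi-invariant metric the quantity
\[
\tau_{G} = \tfrac{1}{2}\langle [T,N], B\rangle
\]
is constantly equal to $1$ along every arc-length parametrized curve. This is the only piece of information that distinguishes the $S^{3}$ case from the general three-dimensional Lie group setting.

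Next, I would invoke Theorem 3.1, which has already established, under the hypotheses on $\alpha$ and on the spinor $\phi$ representing the triad $\{N, B, T\}$, the spinor Frenet equation
\[
\frac{d\phi}{ds} = -i\,\frac{(\tau - \tau_{G})}{2}\,\phi + \frac{\varkappa}{2}\,\widehat{\phi}.
\]
Substituting $\tau_{G} = 1$ from the previous step directly yields the formula in the statement. The hypotheses of Theorem 3.1 transfer verbatim, since Corollary 3.3 assumes exactly the same setup (arc-length parametrization, Frenet frame $\{T, N, B\}$, spinor representing $\{N, B, T\}$ with $\overline{\phi^{t}}\phi = 1$), and $S^{3}$ is one of the admissible bi-invariant Lie groups listed in Remark 3.1.

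There is no real obstacle here; the only thing I would sanity-check is that the normalization $\overline{\phi^{t}}\phi = 1$ and the orientation convention $\langle a \times b, c\rangle > 0$ used in Section~2 remain compatible when the curve lives in $S^{3}$, which is clear because these conditions are formulated entirely in terms of the moving frame and do not depend on the ambient group. Consequently the proof reduces to one line: combine Theorem 3.1 with Remark 3.1 (iii).
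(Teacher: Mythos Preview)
Your proposal is correct and matches the paper's own approach: the paper explicitly notes that Corollaries~3.2 and~3.3 follow ``with a similar proof'' to Corollary~3.1, i.e., by combining Theorem~\ref{teo 3.1} with the value of $\tau_{G}$ given in Remark~\ref{rem 3.1}. Nothing further is needed.
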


\end{document}